\newcommand{\ZZ}{\mathbb{Z}}
\newtheorem{defn}{Definition}
\newcommand{\lm}{\lambda}
\newcommand{\Lm}{\Lambda}
\DeclareMathOperator{\sdim}{sdim}
\DeclareMathOperator{\df}{def}
\DeclareMathOperator{\ch}{ch}
\DeclareMathOperator{\sch}{sch}
\newcommand{\h}{\mathfrak{h}}
\newcommand{\gl}{\mathfrak{gl}}
\newcommand{\g}{\mathfrak{g}}
\newcommand{\sln}{\mathfrak{sl}}
\newcommand{\fw}{\mathcal{F}_W}
\newcommand{\fwo}{\mathcal{F}_{W_1}}
\newcommand{\fwl}{\mathcal{F}_{W_{\Lambda}}}
\newcommand{\eentry}{>}
\newcommand{\dentry}{<}
\newcommand{\str}{\mathfrak{str}}
\newcommand{\tr}{\mathfrak{tr}}
\newcommand{\both}{\times}
\newcommand{\neither}{\circ}
\numberwithin{equation}{section}
\numberwithin{figure}{section}
\newtheorem{lem}{Lemma}
\theoremstyle{plain}
\newtheorem{thm}{Theorem}
\theoremstyle{remark}
\newtheorem{rem}{Remark}
  \theoremstyle{plain}
  \newtheorem*{thm*}{\protect\theoremname}
\newcommand{\dsp}{\displaystyle}
\title{A Superdimension Formula for $\gl(m|n)$-Modules}
\author{Michael Chmutov}
\address{
Department of Mathematics,
University of Minnesota, 
127 Vincent Hall, 
206 Church Street SE,
Minneapolis, MN 55455
USA}
\email{mchmutov@umn.edu}
\author{Rachel Karpman}
\address{
Department of Mathematics,
University of Michigan, Ann Arbor,
530 Church St.,
Ann Arbor, MI 48109-1043
USA}
\email{rkarpman@umich.edu}
\author{Shifra Reif}
\address{
Department of Mathematics,
University of Michigan, Ann Arbor,
530 Church St.,
Ann Arbor, MI 48109-1043
USA}
\email{shifi@umich.edu}
\begin{document}

\begin{abstract}
We give a formula for the superdimension of a finite-dimensional simple $\mathfrak{gl}\left(m|n\right)$-module
using the Su-Zhang character formula. This formula coincides with the superdimension formulas proven by Weissauer and Heidersdorf-Weissauer. As a corollary, we obtain a
simple algebraic proof of a conjecture of Kac-Wakimoto for $\mathfrak{gl}\left(m|n\right)$,
namely, a simple module has nonzero superdimension if and only if
it has maximal degree of atypicality. This conjecture was proven originally
by Serganova using the Duflo-Serganova associated variety.
\end{abstract}
\thanks{The work for this paper was partially supported by RTG grants NSF/DMS-1148634 and NSF/DMS-0943832.}
\maketitle{}

\section{Introduction}
The superdimension of a $\ZZ_2$-graded vector space $V=V_{\bar 0}\oplus V_{\bar 1}$ is defined to be
$$\sdim V:=\dim V_{\bar 0}-\dim V_{\bar 1}.$$ 
It was conjectured in 1994 by Kac and Wakimoto that the superdimension
of a finite-dimensional simple module of a basic Lie superalgebra
$\mathfrak{g}$ is nonzero if and only if the atypicality is maximal \cite{KW}.
This conjecture was supported by a theorem stating that the evaluation
of the so-called Bernstein-Leites (super)character is nonzero exactly
under this condition. At the time, there was no character formula for Lie
superalgebras, and it was not known precisely how the Bernstein-Leites character
was related to the actual character of the module. The conjecture was finally proved by Serganova but without giving a formula for the superdimension \cite{S3}.

Major progress has been made since then on the character theory for basic Lie superalgebras. 
In 1996, Serganova  gave a general character
formula for finite dimensional irreducible representations of $\mathfrak{gl}\left(m|n\right)$ in terms of generalized Kazhdan-Lusztig polynomials \cite{S1,S2}. 
Brundan, in 2003, gave an explicit algorithm for computing
these generalized Kazhdan-Lusztig polynomials 
\cite{B}. In 2007, Su and Zhang used Brundan's algorithm to prove
a character formula which consists of a finite alternating sum of
Bernstein-Leites characters (see Theorem \ref{SZ formula}). 

In this paper we use the Su-Zhang character formula to give a formula
for the superdimension of a simple finite dimensional $\mathfrak{gl} \left(m | n\right)$-module $L\left(\Lambda\right)$
with highest weight $\Lambda$. When $\Lambda$ is of maximal atypicality, the formula consists of a product of two positive terms. The first term, denoted by $s_\Lm$, is the maximal number of monomials that can appear in a Kazhdan-Lusztig polynomial $K_{\Lambda,\mu}$ for any weight $\mu$. As shown by Su and Zhang, this number can be computed easily using Brundan's algorithm (see Equation (\ref{size of s_Lambda})). The second term is equal to the dimension of a simple module of a Lie algebra isomorphic to $\mathfrak{gl}(|m-n|)$. Using the dimension formula for simple Lie algebras, we obtain:

\begin{thm} \label{main thm} Let $\Gamma_\Lambda$  be a maximal set of isotropic roots which are mutually orthogonal and orthogonal to $\Lm+\rho$ and let $M_{\Lm}^+$ be the set of even positive roots of $\g$ orthogonal to $\Gamma_{\Lm}$.  
Then 
\[
|\sdim L(\Lm)| =
\begin{cases} s_{\Lm} \prod_{\alpha \in M_{\Lm}^+}  \frac{ \langle \Lm +\rho, \alpha^{\vee}\rangle}{\langle \Lm+\rho-\rho_{\Lm}^{0}, \alpha^{\vee}\rangle}& \Lm  \emph{ of maximal atypicality}\\
0 & \emph{otherwise}
\end{cases}
\]
where $\rho_\Lambda^0=\frac{1}{2}\sum_{\alpha\in M_\Lambda^{+}} \alpha$.
\end{thm}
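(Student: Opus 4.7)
The plan is to start from the Su-Zhang character formula
\[
\operatorname{ch} L(\Lambda)\;=\;\sum_\mu K_{\Lambda,\mu}(1)\,\operatorname{ch}\overline{L}(\mu),
\]
where the $\overline{L}(\mu)$ are the virtual Bernstein-Leites modules, pass to supercharacters, and evaluate at the origin to get $\sdim L(\Lambda)=\sum_\mu K_{\Lambda,\mu}(1)\,\sdim\overline{L}(\mu)$. The calculation then splits into three stages: first understand each $\sdim\overline{L}(\mu)$, second show it vanishes away from maximal atypicality, and third sum over the $\mu$'s appearing in Su-Zhang.

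First I would recall the product form of the Bernstein-Leites supercharacter, in which the denominator is the Kac-Weyl expression $\prod_{\alpha\in\Delta_0^+}(1-e^{-\alpha})/\prod_{\alpha\in\Delta_1^+}(1+e^{-\alpha})$ and the numerator is a Weyl-type antisymmetrization multiplied by $\prod_{\gamma\in\Gamma_\mu}(1-e^{-\gamma})$ for a maximal orthogonal isotropic set $\Gamma_\mu$. Each $\gamma\in\Gamma_\mu$ produces a zero in the numerator when we specialize $e^\beta\mapsto 1$; for the limit to be nonzero these must be cancelled by zeros of $\prod_{\alpha\in\Delta_0^+}(1-e^{-\alpha})$ in the even denominator. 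A standard pairing argument (matching each isotropic $\gamma$ to an even root $\alpha$ with $\langle\mu+\rho,\alpha^\vee\rangle=0$) shows this is possible precisely when $\mu$ is of maximal atypicality, immediately yielding the ``otherwise $=0$'' case of the theorem and, as a byproduct, the Kac-Wakimoto conjecture for $\gl(m|n)$.

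Assuming maximal atypicality, I would evaluate the remaining $0/0$ limit via L'Hopital along the pairs $(\gamma,\alpha)$ above. After cancelling, the odd denominator specializes to a power of $2$, the Weyl sum collapses to the dimension of a finite dimensional simple module of the reductive stabilizer of $\Gamma_\mu$, which for $\gl(m|n)$ is isomorphic to $\gl(|m-n|)$, and the surviving even roots are exactly the elements of $M_\mu^+$. Applying the Weyl dimension formula to the $\gl(|m-n|)$-module of highest weight $(\Lambda+\rho)-\rho_\mu^0-\rho_{\gl(|m-n|)}$ yields
\[
|\sdim\overline{L}(\mu)| \;=\; \prod_{\alpha\in M_\mu^+}\frac{\langle\Lambda+\rho,\alpha^\vee\rangle}{\langle\Lambda+\rho-\rho_\mu^0,\alpha^\vee\rangle},
\]
where the numerator factor uses that all $\mu$ appearing in the Su-Zhang sum lie in the same $\rho$-shifted Weyl orbit on the typical directions.

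The main obstacle is the last step: showing that summing over the $\mu$'s in the Su-Zhang formula produces $s_\Lambda$ times a single such product rather than a more intricate alternating sum. Here I would exploit Brundan's cap-and-cup description of the weights $\mu$ with $K_{\Lambda,\mu}\ne 0$: his algorithm permutes the isotropic roots in $\Gamma_\mu$ but keeps $M_\mu^+$ and the pairings $\langle\Lambda+\rho,\alpha^\vee\rangle$ for $\alpha\in M_\mu^+$ invariant. Consequently every nonzero term $\sdim\overline{L}(\mu)$ has the same magnitude, with signs that combine coherently (this is where I expect the bookkeeping to be most delicate). The total number of such $\mu$, counted with the values $K_{\Lambda,\mu}(1)$, is by the Su-Zhang estimate precisely $s_\Lambda$ as in (\ref{size of s_Lambda}), completing the proof.
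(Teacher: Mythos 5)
Your overall strategy is the paper's: apply the Su--Zhang formula, pass to supercharacters, evaluate each Bernstein--Leites term at the origin (your L'H\^{o}pital analysis essentially re-derives Kac--Wakimoto's Theorem 3.3, which the paper simply cites), observe that all nonvanishing terms contribute the same magnitude, and count. The vanishing in non-maximal atypicality and the Weyl-dimension-formula product come out the same way in both arguments.

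The genuine gap is the final summation, which you yourself flag as ``where I expect the bookkeeping to be most delicate'' and then resolve by assertion. Two ingredients are missing. First, each term of the Su--Zhang character formula carries the sign $(-1)^{|\Lm+\rho-(\pi(\sigma(\Lm+\rho))_{\Uparrow})_{\Uparrow}|+\ell(\pi)}$; the reason the terms combine coherently is that the factor $(-1)^{|\cdots|}$ is exactly cancelled when the character formula is converted to a supercharacter formula (re-expanding $\prod_{\beta}(1+e^{-\beta})^{-1}$ as $\prod_{\beta}(1-e^{-\beta})^{-1}$ in geometric series flips precisely that sign), leaving only $(-1)^{\ell(\pi)}$. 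Without this observation you are facing an alternating sum with no evident reason to be positive. Second, even granting that, the coefficient multiplying $\dim L_{\Lm}(\Lm+\rho-\rho_{\Lm}^{0})$ is $s_{\Lm}\cdot\frac{n!}{r!}\sum_{\pi\in C_r}{r\choose \pi}(-1)^{\ell(\pi)}$, so one still needs the identity $\sum_{\pi\in C_r}{r\choose \pi}(-1)^{\ell(\pi)}=1$ --- the paper's second lemma, proved by a generating-function computation with $(1-e^x)^t$. Your statement that ``the total number of such $\mu$, counted with the values $K_{\Lambda,\mu}(1)$, is by the Su--Zhang estimate precisely $s_\Lambda$'' conflates $s_{\Lm}=|S^{\Lm}|$ (the maximal number of monomials in a single Kazhdan--Lusztig polynomial) with a signed count over all terms of the formula, and cannot simply be read off. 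A lesser but real issue: your claim that all $\sdim\overline{L}(\mu)$ coincide needs the justification of the paper's first lemma --- a zero-minus-pole count of $(m-r)(n-r)\geq 0$ allowing term-by-term evaluation over coset representatives of $W_{\Lm}$ in $W$, together with $\left.e^{\Lm-\mu}\right|_0=1$ --- rather than an appeal to invariance of cap diagrams.
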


This formula coincides with the superdimenstion formula proven in \cite{HW} and \cite{W} using categorical methods and Duflo-Serganova functors, see Remark \ref{translationRemark}. It would be interesting to find a simple proof for Theorem \ref{main thm} which extends to representations of other types of Lie superalgebras.

\subsection*{Acknowledgement.} 
We are grateful to Gal Binyamini for helpful discussions and to Thorsten Heidersdorf for pointing out previous works about the superdimension formula.

\section{Preliminaries}

Let $\g$ denote the Lie superalgebra $\gl(m | n)$, and without loss of generality assume $m \geq n$.  Let $\h$ denote the Cartan subalgebra of $\g$.  We use the standard notation for the odd and even roots, namely
\[\Delta_{\bar{0}} = \{\epsilon_i-\epsilon_j \mid 1 \leq i \neq j \leq m\} \cup \{\delta_i-\delta_j \mid 1 \leq i \neq j \leq n\}\]
\[\Delta_{\bar{1}} = \{\epsilon_i - \delta_j, \delta_j-\epsilon_i \mid 1 \leq i \leq m, 1 \leq j \leq n\}.\]
We normalize the bilinear form on $\h^*$ so that for all $i,j$ we have $(\epsilon_i,\epsilon_j)=\delta_{i,j}$, $(\delta_i,\delta_j)=-\delta_{i,j}$, and $(\epsilon_i,\delta_j)=0$ where $\delta_{i,j}$ is the Kronecker delta function.

We fix our choice of simple roots to be the standard one, that is
\[\{\epsilon_1 - \epsilon_2,\ldots,\epsilon_{m-1}-\epsilon_m,\epsilon_m-\delta_1,\delta_1-\delta_2,\ldots,\delta_{n-1}-\delta_n\}.\]
Let $\Delta_{\bar{0}}^+$ and $\Delta_{\bar{1}}^+$ be the corresponding sets of even and odd positive roots, respectively.  
Let $Q^+$ denote the $\mathbb{N}$-span of the positive roots of $\g$.  Let $p$ denote the parity function on the roots of $\g$, and extend $p$ to $Q^+$ in the natural way. We shall use the standard partial order on $\mathfrak{h}^*$ defined by $\lm \geq \mu$ if $\lm-\mu \in Q^+$.  
 Let $\rho = \frac{1}{2} \sum_{\alpha \in \Delta_{\bar{0}}^+} \alpha - \frac{1}{2} \sum_{\alpha \in \Delta_{\bar{1}}^+} \alpha.$
For $\mu \in \h^*$, we say that $\mu$ is \emph{dominant} (resp. \emph{strictly dominant}) if $\frac{2(\mu,\alpha)}{(\alpha,\alpha)}\geq 0$ (resp. $\frac{2(\mu,\alpha)}{(\alpha,\alpha)}> 0$) for all $\alpha \in \Delta_{\bar{0}}^+$.
Let $R$ and $\check{R}$ be the \textit{Weyl denominator} and \textit{superdenominator}, respectively, that is
\[R = \frac{\prod_{\alpha \in \Delta_{\bar{0}}^+} (1-e^{-\alpha})}{\prod_{\alpha \in \Delta_{\bar{1}}^+}(1+e^{-\alpha})}
\text{ and }
\check{R}=\frac{\prod_{\alpha \in \Delta_{\bar{0}}^+}(1-e^{-\alpha})}{\prod_{\alpha \in \Delta_{\bar{1}}^+} (1-e^{-\alpha})}.\]

A root of a Lie superalgebra is \textit{isotropic} if it is orthogonal to itself.  The isotropic roots of $\gl(m|n)$ are precisely the odd roots.  For a Lie superalgebra $\mathfrak{a}$, we define the \textit{defect} of $\mathfrak{a}$, denoted $\df \mathfrak{a}$, to be the size of a maximal set of isotropic positive roots which are mutually orthogonal.  For $\g = \gl(m | n)$, with $m \geq n$, we have $\df \g = n$.  

Let $L(\Lm)$ be a simple finite-dimensional representation of highest weight $\Lm$. Note that $\Lm$  is a dominant weight, and since we chose the standard set of simple roots, $\Lm+\rho$ is strictly dominant.  Let $\Gamma_\Lm$ be a maximal set of isotropic roots, which are orthogonal to each other and to $\Lm+\rho$. Since $\Lm+\rho$ is strictly dominant, this set is unique.  The  \textit{atypicality} of $\Lm$ is defined to be $r = |\Gamma_{\Lm}|.$

We set $M_{\Lm}$ to be the set of even roots of $\g$ orthogonal to $\Gamma_{\Lm}$, and let $\g_{\Lm}$ be the Lie algebra with root system $M_{\Lm}$.  Note that if $r=n$, $\g_{\Lm} \cong \gl(m-n)$.  Denote $ M_{\Lm}^+:= M_{\Lm} \cap \Delta_{\bar{0}}^+ $, $\rho_{\Lm}^0=\frac{1}{2} \sum_{\alpha \in M_{\Lm}^+} \alpha$ and
$  R_\Lm := \prod_{\alpha\in M_{\Lm}^+}  \left( 1-e^{-\alpha} \right).$
We denote the simple $\g_\Lm$ module of highest weight $\mu $ by $L_\Lm (\mu)$. We use the same notation for a weight $\lambda\in\h$ and its restriction to $\g_\Lm\cap\h$.

Given a weight space decomposition
$L (\Lm) = \bigoplus_{\mu \in Q^{+}} L_{\Lm - \mu},$
the \textit{character} and \textit{supercharacter} of $L(\Lm)$ are  given by
\[\ch L(\Lm) = \sum_{\mu \in Q^+} (\dim L_{\Lm-\mu}) e^{\Lm-\mu},\quad
\sch L (\Lm)= \sum_{\mu \in Q^+} (-1)^{p(\mu)} (\dim L_{\Lm-\mu}) e^{\Lm-\mu}.\]
Note that these characters yield functions on $\mathfrak{h}$, defined by $e^{\lm}(h) = e^{\lm(h)}$ for $h\in\h$ and $\lm\in\h^*$. For $f$ a function on $\h^*$, let $\left.f\right|_0$ denote evaluation at $0$.  We have
\[\left.\ch L(\Lm)\right|_0 = \dim L(\Lm) \text{ and }\left.\sch L(\Lm)\right|_0 = \sdim L(\Lm)\]

The Weyl group $W$ acts on the space $\mathcal{E}$ of rational functions in $e^\lm$, $\lm\in\h$, by $w e^\lm=e^{w\lm}$. Let $\ell$ denote the length function of $W$.  For $W'\subseteq W$ and  $X\in\mathcal{E}$, we denote
\[    \mathcal{F}_{W'} \left(X\right)=\sum_{w\in W'} (-1)^{\ell(w)} w\left(X\right).  \]

\section{Proof of Theorem \ref{main thm}}

The Su-Zhang character formula gives the character of a finite-dimensional irreducible $\g$-module $L(\Lm)$.  We use this formula to derive a formula for the supercharacter, which we evaluate at zero to find the superdimension of $L(\Lm)$.

\subsection{The Su-Zhang Character Formula}

To state the Su-Zhang formula, we need additional notation.  In particular, we shall introduce two subsets of the Weyl group $W$ of $\g$, denoted $S_{\Lm}$ and $C_r$.

We denote the elements of $\Gamma_{\Lm}$ by $\{\beta_1,\ldots,\beta_r\}$ where $\beta_k = \epsilon_{i_k} - \delta_{j_k}$, and $ j_1 < j_2 < \cdots < j_r$. Note that this notation imposes an order on $\Gamma_{\Lm}$.  We embed $Sym_r$ in the Weyl group $W$ of $\g$ by sending the transposition $(k,\ell) \in Sym_r$ to the product of reflections
 $s_{\epsilon_{i_k}-\epsilon_{i_{\ell}}}s_{\delta_{j_k}-\delta_{j_{\ell}}}$.  
 Thus $(k,\ell)$ maps to an element of $W$ which interchanges $\beta_k$ and $\beta_{\ell}$.  Note that for $\mu\in\Lm-\ZZ\Gamma_\Lm$  and $\sigma \in Sym_r$, we have $\sigma(\mu+\rho) \in \Lm+\rho - \mathbb{Z}\Gamma_{\Lm}$.

To define $S_{\Lm}$, we recall the \textit{weight diagram} construction introduced in \cite{BS}.  We write 
$$\displaystyle \Lm+\rho = \sum_{i=1}^m a_i\epsilon_i - \sum_{j=1}^n b_j\delta_j. $$  
To construct the weight diagram of $\Lm+\rho$, we assign a symbol to each integer $k$, according to the rule

\begin{eqnarray*}
\eentry \quad & \text{if }k \in \{a_1,\ldots,a_m\},\, k \not \in \{b_1,\ldots,b_n\}\\
\dentry \quad & \text{if }k \not\in \{a_1,\ldots,a_m\},\, k \in \{b_1,\ldots,b_m\}\\
\both \quad &\text{if }k \in \{a_1,\ldots,a_m\},\,  k \in  \{b_1,\ldots,b_n\}\\
\neither \quad & \text{if }k \not\in \{a_1,\ldots,a_m\},\, k \not\in \{b_1,\ldots,b_n\}
\end{eqnarray*}
We number the $\both$'s from left to right.  For example, if $$\Lm+\rho = 6 \epsilon_1+5\epsilon_2+3\epsilon_3+2\epsilon_4+\epsilon_5-\delta_1-3\delta_2-6\delta_3-8\delta_4,$$ then the corresponding weight diagram is 

\[\begin{array}{cccccccccc}
\ldots & \both_1 & \eentry & \both_2& \neither &\eentry & \both_3 & \neither & \dentry & \ldots\\
 & \scriptstyle{1} & \scriptstyle{2} & \scriptstyle{3} & \scriptstyle{4} & \scriptstyle{5} & \scriptstyle{6} & \scriptstyle{7} & \scriptstyle{8} & 
\end{array}\]
For $k \leq \ell$, we say that $\beta_k$ and $\beta_{\ell}$ are \textit{strongly connected} if for every $k < i \leq \ell$, the number of entries $\circ$ between $\both_k$ and $\both_i$ in the weight diagram for $\Lm+\rho$ is less than or equal to the number of entries $\both$ between $\both_k$ and $\both_i$.  In the example above, $\beta_1$ and $\beta_2$ are strongly connected, as are $\beta_1$ and $\beta_3$; however, $\beta_2$ and $\beta_3$ are not strongly connected.  For each $1\leqslant s\leqslant r$, let $\max_s^\Lm$ be the largest $t\leq r$ such that $\beta_s$ and $\beta_t$ are strongly connected. 

\begin{defn}
Let
\[S^{\Lm} :=\{\sigma\in Sym_r\mid \sigma^{-1}(s) < \sigma^{-1}(t) \text{ if $s < t$ and $\beta_s$ and $\beta_t$ are strongly connected}\}.\]
By \cite[3.18]{SZ1}, we obtain 
\begin{equation}
\label{size of s_Lambda}
s_{\Lm} := |S_{\Lm}| = \frac{r!}{{\displaystyle \prod_{s=1}^r} (\max_s^{\Lm} - s + 1)}.
\end{equation}
\end{defn}

\begin{rem} \label{translationRemark}
In \cite{HW}, the formula for the superdimension is described using a cup diagram which connects $\times$'s and  $\circ$'s (which are denoted there by $\vee$ and $\wedge$, respectively). In that notation, two roots are strongly connected if and only if the corresponding cups are nested. Thus, $s_\Lambda$ coincides with the formula for $m(\lambda)$ in Section 14 of  \cite{HW}.
\end{rem}

\begin{defn}
Let $C_r$ be the set of cyclic permutations of order $r$, that is, all permutations of the form
\[\pi = (1,\ldots,i_1)(i_1+1,\ldots,i_1+i_2)\cdots(i_1+\ldots+i_{t-1}+1,i_1+\ldots+i_{t-1}+2,\ldots,r)\]
where $i_1,\ldots,i_t \in \mathbb{N}$ and $i_1+\ldots+i_t=r$. For $\pi \in C_r$, we define
\[{r \choose \pi} = \frac{r!}{i_1!i_2!\cdots i_t!}.\]
\end{defn}

We define the operation $\Uparrow$ for $\lm \in \Lm+\rho-\mathbb{N} \Gamma_{\Lm}$ by setting $\mu = \lm_{\Uparrow}$ to be the maximal weight in $\Lm+\rho-\mathbb{N}\Gamma_{\Lm}$ where the coefficients of $\delta_{j_k}$ in $\mu$ are weakly increasing or, equivalently, the coefficients of $\epsilon_{j_k}$ in $\mu$ are weakly decreasing (with the notation of \cite{SZ1}, $\lambda_\uparrow=\left(\lambda+\rho\right)_\Uparrow-\rho$).

For $\lm,\mu \in \Lm+\rho - \mathbb{Z}\Gamma_{\Lm}$, let $\displaystyle \lm-\mu = \sum_{i=1}^r c_i \beta_i$.  We define $|\lm-\mu|$ by $\displaystyle |\lm-\mu| := \sum_{i=1}^r |c_i|$.  

With the notation above, we have:
\begin{thm} \cite[Theorem 4.9]{SZ1} \label{SZ formula} The character of a finite dimensional simple $\g$-module with highest weight $\Lm$ is given by
\[\ch L(\Lm) = \sum_{\sigma \in S^{\Lm}, \pi \in C_r} \frac{1}{r!} {r \choose \pi} (-1)^{|\Lm+\rho-(\pi (\sigma (\Lm+\rho))_{\Uparrow})_{\Uparrow}|+\ell(\pi)}
e^{-\rho}R^{-1}  \cdot \fw \left(\frac{e^{(\pi (\sigma (\Lm+\rho))_{\Uparrow})_{\Uparrow}}}{ \prod_{\beta \in \Gamma_{\Lambda}} (1 + e^{-\beta})}\right)\]
\end{thm}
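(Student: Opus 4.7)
The plan is to transform Theorem~\ref{SZ formula} into a supercharacter formula via a parity twist, evaluate each resulting term at $0$ as a Bernstein-Leites–type superdimension, and then simplify the combinatorial sum to extract the claimed $s_\Lm$ times a Weyl-dimension product.

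First, let $\tau$ denote the parity substitution $e^{-\alpha}\mapsto (-1)^{p(\alpha)}e^{-\alpha}$ on the ring of formal exponentials. Since the Weyl group $W$ of $\gl(m|n)$ preserves parities, $\tau$ commutes with the $W$-action; it converts each $(1+e^{-\beta})$ into $(1-e^{-\beta})$ and thereby sends $R$ to $\check R$; and it satisfies $\tau(\ch L(\Lm))=\sch L(\Lm)$. Applying $\tau$ to Theorem~\ref{SZ formula} produces a supercharacter formula
\[
\sch L(\Lm)=\sum_{\sigma\in S^\Lm,\,\pi\in C_r}\frac{\eta_{\sigma,\pi}}{r!}\binom{r}{\pi}(-1)^{|\Lm+\rho-\mu_{\sigma,\pi}|+\ell(\pi)}\,e^{-\rho}\check R^{-1}\,\fw\!\left(\frac{e^{\mu_{\sigma,\pi}}}{\prod_{\beta\in\Gamma_\Lm}(1-e^{-\beta})}\right),
\]
where $\mu_{\sigma,\pi}:=(\pi(\sigma(\Lm+\rho))_\Uparrow)_\Uparrow$ and $\eta_{\sigma,\pi}\in\{\pm 1\}$ absorbs the parity sign produced by $\tau$ on $e^{\mu_{\sigma,\pi}-\rho}$. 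Each summand is a Bernstein-Leites supercharacter attached to $\mu_{\sigma,\pi}-\rho$.

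Next, evaluate each such summand at $0$. Using the factorization
\[
\check R=R_\Lm\cdot\frac{\prod_{\alpha\in\Delta_{\bar 0}^+\setminus M_\Lm^+}(1-e^{-\alpha})}{\prod_{\alpha\in\Delta_{\bar 1}^+\setminus\Gamma_\Lm}(1-e^{-\alpha})\cdot\prod_{\beta\in\Gamma_\Lm}(1-e^{-\beta})},
\]
the common $\prod_{\beta\in\Gamma_\Lm}(1-e^{-\beta})$ factor cancels against the denominator inside $\fw$, and the limit reduces to an antisymmetrization over the stabilizer of $\Gamma_\Lm$ in $W$, approached along the degenerate direction $\Gamma_\Lm$. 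Since every $\mu_{\sigma,\pi}$ lies in the same coset $\Lm+\rho+\ZZ\Gamma_\Lm$, its restriction to the Cartan of $\g_\Lm$ is independent of $(\sigma,\pi)$; standard $W$-antisymmetry then forces the limit to vanish unless the atypicality is maximal, in which case the Weyl dimension formula for $\g_\Lm\cong\gl(m-n)$ produces the common factor $\prod_{\alpha\in M_\Lm^+}\langle\Lm+\rho,\alpha^\vee\rangle/\langle\Lm+\rho-\rho_\Lm^0,\alpha^\vee\rangle$ for every pair $(\sigma,\pi)$.

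After factoring out this common Weyl-dimension term, the theorem reduces to the combinatorial identity
\[
\Biggl|\sum_{\sigma\in S^\Lm,\,\pi\in C_r}\frac{\eta_{\sigma,\pi}}{r!}\binom{r}{\pi}(-1)^{|\Lm+\rho-\mu_{\sigma,\pi}|+\ell(\pi)}\Biggr|=s_\Lm.
\]
I expect the inner $\pi$-sum to collapse via a generating-function argument: the weight $\binom{r}{\pi}(-1)^{\ell(\pi)}/r!$ summed over cyclic shapes of a composition of $r$ corresponds to extracting a coefficient of $\sum_{t\geq 1}(-1)^t(e^x-1)^t=e^{-x}-1$, and, coupled with a direct analysis of how $\pi$ acts on the already partially sorted weight $(\sigma(\Lm+\rho))_\Uparrow$, this should leave a single surviving term depending only on $\sigma$. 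The outer sum over $\sigma\in S^\Lm$ then contributes exactly $|S^\Lm|=s_\Lm$ via equation~\eqref{size of s_Lambda}. The principal difficulty is the sign-and-weight bookkeeping in this last step: the double application of $\Uparrow$ makes $\mu_{\sigma,\pi}$ and $\eta_{\sigma,\pi}$ subtle functions of $(\sigma,\pi)$, so the cancellation will be cleanest when organized through the weight-diagram / cup-diagram correspondence of Remark~\ref{translationRemark}, where strongly connected pairs correspond to nested cups and track exactly the cycle structure of $\pi$.
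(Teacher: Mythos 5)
The statement you were asked to prove is the Su--Zhang character formula itself, which the paper does not prove at all: it is imported wholesale from \cite[Theorem 4.9]{SZ1}. Your proposal never addresses it. Your very first sentence takes Theorem \ref{SZ formula} as given (``transform Theorem \ref{SZ formula} into a supercharacter formula via a parity twist'') and then derives consequences from it; as a proof of the statement this is circular. What you have actually sketched is the paper's proof of its \emph{main} result, Theorem \ref{main thm}: the parity twist producing the supercharacter identity \eqref{sch}, the evaluation at zero via the pole/zero count for $e^{-\rho}\check{R}^{-1}e^{\rho_\Lm^0}R_\Lm\prod_{\beta\in\Gamma_\Lm}(1-e^{-\beta})^{-1}$ together with the Kac--Wakimoto evaluation (the paper's Lemma \ref{same} and Theorem \ref{KWbig}), and the final combinatorial identity. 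Those steps, suitably tightened, reproduce Section 3 of the paper, but they say nothing about why $\ch L(\Lm)$ equals the stated alternating sum over $S^\Lm\times C_r$ in the first place.

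An actual proof of Theorem \ref{SZ formula} lies much deeper and uses machinery entirely absent from your write-up: in \cite{SZ1} the formula is obtained by starting from Serganova's expression of $\ch L(\Lm)$ in terms of generalized Kazhdan--Lusztig polynomials $K_{\Lm,\mu}$, running Brundan's algorithm \cite{B} to compute those polynomials explicitly, and resumming the output; the sets $S^\Lm$ and $C_r$ and the double $\Uparrow$ operation are precisely the bookkeeping devices for that algorithm. Nothing in your proposal supplies this, so there is no route from what you wrote to the statement. As a side remark, even read as a sketch of Theorem \ref{main thm}, your last step is organized less cleanly than the paper's: you do not need the $\pi$-sum to ``collapse to a single surviving term per $\sigma$,'' nor any signs $\eta_{\sigma,\pi}$. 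By Lemma \ref{same}, every term $\chi((\pi(\sigma(\Lm+\rho))_\Uparrow)_\Uparrow)$ evaluates to the \emph{same} constant $\left.\chi(\Lm+\rho)\right|_0$, so the entire sum factors as $s_\Lm$ times $\sum_{\pi\in C_r}\binom{r}{\pi}(-1)^{\ell(\pi)}$, and the latter equals $1$ by the elementary generating-function computation $\sum_{t\geq 1}(1-e^x)^t=e^{-x}-1$.
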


We transform this character formula to a formula for the supercharacter. For $\nu\in\h^*$ let
\[\chi(\nu):=e^{-\rho}\check{R}^{-1} \cdot \mathcal{F}_W \left( \frac{e^{\nu}}{\prod_{\beta \in \Gamma_{\Lm} }(1-e^{-\beta})}\right).\]

Expanding each term $\frac{1}{1-e^{-\beta}}$ as a geometric series, and changing signs as appropriate, we obtain
\begin{equation}
\label{sch} 
\sch L(\Lambda) = \sum_{\sigma \in S^{\Lambda},\pi \in C_r} \frac{1}{r!}{r \choose \pi} (-1)^{\ell(\pi)} \chi((\pi (\sigma(\Lambda+\rho))_{\Uparrow})_{\Uparrow} ).\end{equation}

\subsection{Evaluation}

We now compute the superdimension of $L(\Lm)$ by evaluating the formula for the supercharacter in (\ref{sch}).  We first show that many of the terms evaluate to the same number.
\begin{lem} 
\label{same} 
For any $\mu \in \Lm - \mathbb{Z}\Gamma_{\Lm}$ with $\mu+\rho$ $M_{\Lm}$-dominant, $\left. \chi(\Lm+\rho) \right|_{0} = \left. \chi(\mu+\rho) \right|_{0}$.
\end{lem}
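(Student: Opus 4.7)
The plan is to reduce the identity to elementary steps of the form $\chi(\nu)\mapsto\chi(\nu-\beta_i)$ with $\beta_i\in\Gamma_\Lm$, and to show each such step preserves the value at the origin.

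Since $\mu-\Lm\in\mathbb{Z}\Gamma_\Lm$, we may construct a chain $\Lm=\lambda_0,\lambda_1,\ldots,\lambda_N=\mu$ inside $\Lm-\mathbb{Z}\Gamma_\Lm$ whose consecutive weights differ by a single element of $\pm\Gamma_\Lm$. Because $\Gamma_\Lm\perp M_\Lm$, for every $\lambda$ in this chain the pairings $\langle\lambda+\rho,\alpha^\vee\rangle$ with $\alpha\in M_\Lm^+$ coincide with $\langle\Lm+\rho,\alpha^\vee\rangle$; as $\Lm+\rho$ is strictly $\g$-dominant, the $M_\Lm$-dominance hypothesis holds automatically along the chain. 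Hence it suffices to establish
\[
\chi(\nu)\big|_0=\chi(\nu-\beta_i)\big|_0
\]
for every $\nu\in(\Lm+\rho)+\mathbb{Z}\Gamma_\Lm$ and every $\beta_i\in\Gamma_\Lm$.

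For the elementary step, the identity $e^\nu-e^{\nu-\beta_i}=e^\nu(1-e^{-\beta_i})$ allows one factor in $\prod_{\beta\in\Gamma_\Lm}(1-e^{-\beta})$ to cancel, yielding
\[
\chi(\nu)-\chi(\nu-\beta_i)=e^{-\rho}\check{R}^{-1}\cdot\fw\!\left(\frac{e^\nu}{\prod_{j\neq i}(1-e^{-\beta_j})}\right).
\]
The right-hand side is a Bernstein--Leites style supercharacter expression carrying only $r-1$ isotropic factors in the denominator. Since $r\leq n=\df\g$, the number of atypical factors is strictly less than the defect, so the vanishing direction of the Kac--Wakimoto theorem \cite{KW} implies this expression evaluates to zero at $h=0$. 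Alternatively, one can argue directly: the $W$-antiinvariance of $\fw$ forces vanishing along every hyperplane $\alpha=0$ for $\alpha\in\Delta_{\bar{0}}^+$, canceling the even-root denominator in $\check{R}^{-1}=\prod_{\alpha\in\Delta_{\bar{1}}^+}(1-e^{-\alpha})/\prod_{\alpha\in\Delta_{\bar{0}}^+}(1-e^{-\alpha})$; the surviving factor $\prod_{\alpha\in\Delta_{\bar{1}}^+}(1-e^{-\alpha})$ then vanishes at the origin to order $mn$, which exceeds the pole order at most $r-1\leq n-1<mn$ contributed by the remaining $\beta_j$-factors.

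The main obstacle is making the order-of-vanishing analysis rigorous: $\fw$ applied to the given meromorphic function has poles along several intersecting hyperplanes near the origin, and the interplay between its zeros coming from $W$-antiinvariance and its poles coming from the remaining isotropic factors must be controlled carefully. Once the elementary cancellation is established, iterating along the chain completes the proof.
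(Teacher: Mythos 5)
Your reduction to elementary steps and the identity $\chi(\nu)-\chi(\nu-\beta_i)=e^{-\rho}\check{R}^{-1}\fw\bigl(e^{\nu}/\prod_{j\neq i}(1-e^{-\beta_j})\bigr)$ are fine, but the entire content of the lemma lies in showing that this difference evaluates to zero at the origin, and neither of your two justifications establishes that. Theorem \ref{KWbig} as available here applies to $\chi(\Lm+\rho)$ where $\Gamma_{\Lm}$ is the \emph{maximal} set of pairwise orthogonal isotropic roots orthogonal to the strictly dominant weight $\Lm+\rho$; your expression involves the non-maximal set $\{\beta_j\}_{j\neq i}$ (note that $\beta_i$ is still orthogonal to $\nu$) attached to a weight $\nu$ that need not be dominant, so the quoted theorem does not apply, and the generalization you would need is essentially as hard as the statement being proved. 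Your direct count is also wrong as stated: on a generic line through the origin, each summand $e^{-\rho}\check{R}^{-1}\cdot w\bigl(e^{\nu}/\prod_{j\neq i}(1-e^{-\beta_j})\bigr)$ has order exactly $mn-\tfrac{m(m-1)+n(n-1)}{2}-(r-1)=n-\binom{m-n}{2}-r+1$, which is negative already for $m-n\geq 3$ and $r$ near $n$; so term-by-term evaluation genuinely fails, and the assertion that anti-invariance supplies the $|\Delta_{\bar 0}^+|$ missing zeros \emph{in addition to} controlling the pole order $r-1$ is precisely the unproved point ($\fw$ of a function with poles through the origin need not vanish on the reflection hyperplanes there). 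You flag this yourself as ``the main obstacle,'' which is an accurate self-assessment: the proof is not complete.

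The gap is closed by the device the paper uses: decompose $W=W_1W'$ with $W'$ the Weyl group of the even roots orthogonal to the relevant isotropic set, resum the inner $\mathcal{F}_{W'}$ via the Weyl character formula into $e^{\rho'}R'\cdot\ch L'(\cdot)$ (a Laurent polynomial, hence finite at $0$), and count the order of the remaining prefactor on each $W_1$-translate. Applied to your difference term, whose orthogonal even root system is of type $\gl(m-r+1)\oplus\gl(n-r+1)$, the prefactor has order $(m-r+1)(n-r+1)>0$ at the origin, so every term vanishes and the telescoping goes through. This would make your route work, but it invokes the same mechanism that the paper applies just once, directly to $\chi(\mu+\rho)$, where the prefactor has order $(m-r)(n-r)\geq 0$ and the possibly nonzero limit is handled by observing that $\ch L_{\Lm}(\Lm+\rho-\rho_{\Lm}^0)$ and $\ch L_{\Lm}(\mu+\rho-\rho_{\Lm}^0)$ differ by the factor $e^{\Lm-\mu}$, which is $1$ at the origin. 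After the repair, your argument is therefore longer without being more elementary.
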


\begin{proof}
Let $W_{\Lm}$ be the subgroup of $W$ generated by roots from $M_{\Lm}$ and let $W_1$ be a set of left coset representatives, so that $W = W_1W_{\Lm}$. We have
\[
\chi(\mu+\rho) 
= e^{-\rho}\check{R}^{-1} \cdot \fwo \left(\frac{\fwl(e^{\mu+\rho})}{\prod_{\beta \in \Gamma_{\Lm}}(1-e^{-\beta})} \right). 
\]
Since $\Lm$ is dominant, $\mu+\rho-\rho_{\Lm}^0$ is $M_{\Lm}$-dominant, the Weyl character formula implies
\[\fwl(e^{\mu+\rho}) = e^{\rho_{\Lm}^{0}}R_{\Lm} \cdot \ch L_{\Lm}(\mu + \rho - \rho_{\Lm}^{0}).\]
Since $ e^{-\rho}\check{R}^{-1} $ is $W_1$-anti-invariant, we have
\[ \chi(\mu+\rho) =  
 \sum_{w\in W_1} w\left(\frac{e^{-\rho}\check{R}^{-1} \cdot e^{\rho_{\Lm}^{0}}R_{\Lm}}{\prod_{\beta \in \Gamma_{\Lm}}(1-e^{-\beta})} 
 \cdot \ch L_{\Lm}(\mu + \rho - \rho_{\Lm}^{0}) \right).\]
 
The number of zeros minus the number of poles of the term ${e^{-\rho}\check{R}^{-1} \cdot e^{\rho_{\Lm}^{0}}R_{\Lm}} \cdot
{\prod_{\beta \in \Gamma_{\Lm}}(1-e^{-\beta})^{-1}} $ at $0$ is $(m-r)(n-r)$. 
Indeed $\left| \Delta _{\bar 0} ^{+}\right|= \frac{n(n-1)+m(m-1)}{2} $,  $\left| \Delta _{\bar 1} ^{+} \right|=mn$, $ \left| M_\Lm^{+}\right|=\frac{(m-r)(m-r-1)+(n-r)(n-r-1)}{2}$ and  $\left| \Gamma_\Lambda \right| =r$. 
Since $(m-r)(n-r)\ge 0$, we can evaluate $\chi \left(\mu+\rho\right)$ term by term, that is 
\[\left. \chi(\mu+\rho) \right|_0 =  
 \sum_{w\in W_1} w\left(\left. \frac{e^{-\rho}\check{R}^{-1} \cdot e^{\rho_{\Lm}^{0}}R_{\Lm}}{\prod_{\beta \in \Gamma_{\Lm}}(1-e^{-\beta})} \right|_0
 \cdot \left. \ch L_{\Lm}(\mu + \rho - \rho_{\Lm}^{0}) \right|_0 \right).  \]
Since $\Lm-\mu$ is orthogonal to $M_{\Lm}$, we get that
\[\ch L_{\Lm}(\Lm +\rho -\rho_{\Lm}^{0})=e^{\Lm-\mu}\ch L_{\Lm}(\mu +\rho - \rho_{\Lm}^{0}).\]
Since $\left.e^{\Lm-\mu}\right|_0=1$, the evaluation is the same as desired.
\end{proof}

We use the following theorem of Kac and Wakimoto to compute $\left. \chi(\Lm+\rho) \right|_{0}$ (note with the notation of \cite{KW}, 
$\chi(\Lm+\rho)=j_{\Lm}\sch_{\Lm}$). 

\begin{thm}\cite[Theorem 3.3]{KW} 
\label{KWbig} One has
\[\left|\left.\chi(\Lm+\rho)\right|_{0}\right|= \begin{cases}
n!\dim L_{\Lm}(\Lm+\rho-\rho_{\Lm}^{0}) & r = \df \g\\
0 & \emph{ otherwise.}
\end{cases}\]
\end{thm}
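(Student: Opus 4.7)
The plan is to combine Lemma~\ref{same} with the Weyl character formula for $\g_\Lm$ and then extract $\chi(\Lm+\rho)|_0$ as a limit along a generic direction. By Lemma~\ref{same}, we may replace $\Lm$ by any $\mu\in\Lm-\ZZ\Gamma_\Lm$ with $\mu+\rho$ being $M_\Lm$-dominant, and the derivation in that proof gives
\[
\chi(\mu+\rho)=\sum_{w\in W_1} w\!\left(K\cdot\ch L_\Lm(\mu+\rho-\rho_\Lm^0)\right),
\]
where, after canceling $R_\Lm$ and $\prod_{\beta\in\Gamma_\Lm}(1-e^{-\beta})$ against factors in $\check R^{-1}$,
\[
K=e^{-\rho+\rho_\Lm^0}\cdot\frac{\prod_{\alpha\in\Delta_{\bar 1}^+\setminus\Gamma_\Lm}(1-e^{-\alpha})}{\prod_{\alpha\in\Delta_{\bar 0}^+\setminus M_\Lm^+}(1-e^{-\alpha})}.
\]
The same zero--pole count as in Lemma~\ref{same} shows that $K$ has net order of vanishing $(m-r)(n-r)$ at $0$.

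Next, I specialize to $h=th_0$ for a generic $h_0\in\h$ and use $1-e^{-t\alpha(h_0)}=t\alpha(h_0)+O(t^2)$. If $r<\df\g=n$, the net order $(m-r)(n-r)$ is positive, so each summand (and hence the entire expression) tends to $0$, giving $\chi(\Lm+\rho)|_0=0$. If $r=n$, the numerator and denominator sizes of $K$ agree and each summand has a finite limit; after factoring out $\dim L_\Lm(\Lm+\rho-\rho_\Lm^0)$ (which equals $\dim L_\Lm(\mu+\rho-\rho_\Lm^0)$ since $\Lm-\mu\perp M_\Lm$), the claim reduces to the symmetrization identity
\[
\left|\sum_{w\in W_1}\frac{\prod_{\alpha\in\Delta_{\bar 1}^+\setminus\Gamma_\Lm}(w\alpha)(h_0)}{\prod_{\alpha\in\Delta_{\bar 0}^+\setminus M_\Lm^+}(w\alpha)(h_0)}\right|=n!
\]
for every regular $h_0\in\h$. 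Direction-independence of the left side is automatic from the regularity at $0$ of $\chi(\mu+\rho)$ as a whole; the value $n!$ is the combinatorial core.

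The main obstacle is this symmetrization identity. A natural strategy is to fix coordinates $\Gamma_\Lm=\{\epsilon_{i_k}-\delta_k\}_{k=1}^{n}$, parametrize $W_1$ as the choice of an $n$-element subset of $\{1,\ldots,m\}$ together with a permutation of those indices, and reduce the sum to a Vandermonde-type identity by induction on $n$; this is essentially the original combinatorial argument of \cite{KW}.
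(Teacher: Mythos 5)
The paper itself gives no proof of this statement: it is imported verbatim from \cite[Theorem 3.3]{KW} (via the dictionary $\chi(\Lm+\rho)=j_\Lm\sch_\Lm$), so there is no internal argument to compare yours against. Judged on its own, the first half of your plan is sound and consistent with the computation already present in the proof of Lemma~\ref{same}: the coset decomposition $W=W_1W_\Lm$, the cancellation producing $K=e^{-\rho+\rho_\Lm^0}\prod_{\alpha\in\Delta_{\bar 1}^+\setminus\Gamma_\Lm}(1-e^{-\alpha})\big/\prod_{\alpha\in\Delta_{\bar 0}^+\setminus M_\Lm^+}(1-e^{-\alpha})$, and the zero--pole count $(m-r)(n-r)$ (there are $mn-r$ first-order zeros in the numerator against $r(m+n-r-1)$ in the denominator) all check out, and they do yield $\left.\chi(\Lm+\rho)\right|_0=0$ when $r<\df\g$.

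In the case $r=\df\g=n$, however, the proposal stops exactly where the theorem begins. The identity $\left|\sum_{w\in W_1}\prod_{\alpha\in\Delta_{\bar 1}^+\setminus\Gamma_\Lm}(w\alpha)(h_0)\big/\prod_{\alpha\in\Delta_{\bar 0}^+\setminus M_\Lm^+}(w\alpha)(h_0)\right|=n!$ is the entire content of the Kac--Wakimoto evaluation, and you state it and then defer to ``the original combinatorial argument of \cite{KW}''; nothing in the proposal establishes either that this sum is independent of $h_0$ or that its value is $n!$. The independence claim as written is circular: each summand has honest poles along the hyperplanes $(w\alpha)(h)=0$, $\alpha\in\Delta_{\bar 0}^+\setminus M_\Lm^+$, and the regularity of $\chi(\mu+\rho)$ at $0$ is precisely what is being proved, not a given. (For the paper's application one could sidestep this by fixing a single generic $h_0$ throughout, since $\sch L(\Lm)$ is a finite sum of exponentials and hence entire --- but that still does not produce the value $n!$.) A smaller symptom of the same issue: your parametrization of $W_1$ by an $n$-element subset of $\{1,\dots,m\}$ together with a permutation accounts for $m!/(m-n)!$ elements, whereas $|W_1|=m!\,n!/(m-n)!$, because $W_\Lm\cong Sym_{m-n}$ acts only on the $\epsilon$-indices outside $\{i_1,\dots,i_n\}$ and the full $Sym_n$ factor on the $\delta$-side survives in $W_1$. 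So the proposal is a correct reduction of the theorem to its combinatorial core, but the core itself is left unproved.
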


Since $(\pi (\sigma (\Lambda+\rho))_{\Uparrow})_{\Uparrow}$ is contained in $\Lm+\rho - \ZZ \Gamma_{\Lm}$ for every $\pi \in C_r$ and $\sigma \in S_{\Lm}$, Lemma $1$ implies that each term $\left.\chi((\pi (\sigma (\Lambda+\rho))_{\Uparrow})_{\Uparrow})\right|_0$ is equal to the constant $\left.\chi(\Lm+\rho)\right|_0$.  Hence, if $r \neq \df \g$, the formula evaluates to $0$, completing the proof for this case.  If $r = \df \g = n$, we have
\[\sdim L(\Lm)= \pm \sum_{\sigma \in S^{\Lambda},\pi \in C_r} \frac{1}{r!}{r \choose \pi} (-1)^{\ell(\pi)} n!\dim L_{\Lm}(\Lm+\rho-\rho_{\Lm}^{0}).\]
By the dimension formula for simple Lie algebras we have
$$ \dim L_{\Lm}(\Lm+\rho-\rho_{\Lm}^{0}) =  \prod_{\alpha \in M_{\Lm}^+}  \frac{ \langle \Lm +\rho, \alpha^{\vee}\rangle}{\langle \Lm+\rho-\rho_{\Lm}^{0}, \alpha^{\vee}\rangle}.$$

To complete the proof of the Theorem \ref{main thm}, it remains to prove the following lemma.
\begin{lem} For $r>0$, we have $\displaystyle \sum_{\pi \in C_r} {r\choose \pi}(-1)^{\ell(\pi)}=1.$
\end{lem}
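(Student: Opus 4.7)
The plan is to reduce the identity to a generating function computation. First, I would compute $\ell(\pi)$ explicitly for $\pi \in C_r$. A contiguous increasing cycle $(a,a+1,\ldots,a+k-1)$ in $Sym_r$ has one-line notation $\ldots,a+1,a+2,\ldots,a+k-1,a,\ldots$, and a direct inversion count shows its Coxeter length is $k-1$. Since the cycles of $\pi\in C_r$ act on disjoint consecutive blocks of indices, their lengths add, giving $\ell(\pi)=\sum_{j}(i_j-1)=r-t$ for $\pi$ with cycle lengths $(i_1,\ldots,i_t)$.

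Second, I would reparameterize: elements of $C_r$ are in bijection with compositions $(i_1,\ldots,i_t)$ of $r$ into $t\geq 1$ positive parts. Under this bijection the identity becomes
$$\sum_{t=1}^{r}(-1)^{r-t}\sum_{\substack{i_1+\cdots+i_t=r\\ i_j\geq 1}}\frac{r!}{i_1!\cdots i_t!}=1.$$

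Third, I would evaluate the left-hand side using exponential generating functions. Write $a_r$ for the left-hand side and $f(x)=\sum_{r\geq 1} a_r\,x^r/r!$. Using the factorization $(-1)^{r-t}=\prod_j(-1)^{i_j-1}$ together with the splitting of the multinomial coefficient,
$$f(x)=\sum_{t\geq 1}\left(\sum_{i\geq 1}(-1)^{i-1}\frac{x^i}{i!}\right)^{\!t}=\sum_{t\geq 1}(1-e^{-x})^t=\frac{1-e^{-x}}{e^{-x}}=e^x-1=\sum_{r\geq 1}\frac{x^r}{r!},$$
so comparing coefficients gives $a_r=1$ for every $r\geq 1$, as required.

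The main obstacle is really the first step: one must verify that the Coxeter length of a contiguous increasing $k$-cycle in $Sym_r$ is exactly $k-1$, so that the lengths of the disjoint cycles of $\pi$ add cleanly to $r-t$. Once this is established, the remaining manipulations are routine. An alternative proof by induction on $r$, via the recursion obtained by conditioning on the length of the first cycle, would also work, but the EGF computation above seems the most transparent.
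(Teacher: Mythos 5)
Your proposal is correct and follows essentially the same route as the paper: both determine that $(-1)^{\ell(\pi)}=(-1)^{r-t}$ for $\pi$ with $t$ cycles, split the sum over the number of cycles into a sum over compositions of $r$, and evaluate via the geometric series $\sum_{t\geq 1}(1\pm e^{\mp x})^t$ of exponential generating functions. Your extra verification of the Coxeter length of a contiguous increasing cycle by inversion counting is a harmless elaboration of the parity fact the paper simply cites.
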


\begin{proof}
The parity of a permutation $\pi\in Sym_r$ is $r$ plus the number $t$ of cycles of $\pi$. Splitting the sum on the left hand side based on the number of 
cycles we perform the following calculation using generating functions, where $[x^r]$ is the operator that takes the coefficient of $x^r$ of a power series.
\begin{align*}
\sum_{\pi \in C_r} {r\choose \pi}(-1)^{\ell(\pi)} & = \dsp \sum_{t\geqslant 1}\sum_{\substack{r_1+\dots+r_t = r\\ r_i\neq 0}} \frac{r!}{r_1!\dots r_t!} (-1)^{r+t}\\
&= (-1)^r r! \sum_{t\geqslant 1} \left[x^r\right] (1-e^x)^t\\
& = (-1)^r r! \left[x^r\right]\left(\sum_{t\geqslant 1}  (1-e^x)^t\right)\\
& = \dsp (-1)^r r! \left[x^r\right]\left(\frac{(1-e^x)}{1-(1-e^x)}\right)\\
& = (-1)^r r! \left[x^r\right]\left(e^{-x}-1\right)\\
&= 1
\end{align*} 
\end{proof}

\section{Examples}

Let us illustrate our formula with a few examples.  
We use the vector notation for the weights of $\g$, namely
\[(a_1,\cdots,a_m \mid b_1,\ldots,b_n)  :=  \sum_{i=1}^m a_i \epsilon_i - \sum_{i=1}^n b_i \delta_i.\]
 Note that $a_i=b_j$ means that $\epsilon_i-\delta_j \in \Gamma_\Lm$. Shifting the highest weight of a $\g$-module by $\str :=(1,\ldots,1|1,\ldots,1)$ does not change the superdimension of the module. Similarly, shifting the highest weight of a $\g_\Lm$-module by $\tr:=\sum_{i\in M}\epsilon_i$, $ M=\{i\mid \epsilon_i-\delta_j\notin\Gamma(\Lm)\ \forall j\} $ does not change the dimension. Thus, the computations below are done up to a multiple of $\str$ and $\tr$.
 
\subsection{The trivial representation}

The highest weight of  the trivial representation is $\Lm=0$, so $$\Lm +\rho=(m,m-1,\ldots,2,1 \mid 1,2,\ldots,n)$$ and $s_{\Lm} = 1$.
We have that $M_{\Lm} = \left\{ \epsilon_i - \epsilon_j    \mid 1 < i,j \leq m-n \right\}$ and
$ \rho-\rho_{\Lm}^0  =0$.
Thus, our formula gives that the superdimension of the trivial representation is equal to the dimension of the trivial representation of $\mathfrak{g}_\Lm$  which is $1$ as desired.

\subsection{The natural representation} Let $V$ be the natural representation of $\g$.  Let us show that our formula gives $|\sdim(V)|=m-n$.  

The highest weight of $V$ is $\Lm = \epsilon_1$, so we have $\Lm + \rho = (m+1,m-1,\ldots,1|1,2,\ldots,n)$, and $s_{\Lm}=1$.
Then the atypicality is $n$ for $m > n$, and $n-1$ for $m = n$.  In the latter case, we have $r \neq n$, and so our formula gives $0$ for the superdimension, as desired.

For $m > n$,  $M_{\Lm}=\{\epsilon_i-\epsilon_j \mid i,j \leq m-n\},$ and we have
$\Lm+\rho-\rho_{\Lm}^{0} =\epsilon_1$. However, this is the highest weight of the natural representation of $M_{\Lm} \cong \gl(m-n)$, and we get $|\sdim V|=\dim L_{Lm}(\epsilon_1)=m-n$.

\subsection{The adjoint representation} 

Let $V$ be the irreducible component of the adjoint representation of $\g$ corresponding to $\sln(m | n)$ for $m > n$, and to $\mathfrak{psl}(n|n)$ for $m = n$.  Then 
\[|\sdim(V)| = m^2 + n^2 - 2mn - 1 - \delta_{mn}.\] 
The highest weight of $V$ is the highest root $\Lm = \epsilon_1 - \delta_n$
and $$\Lm+\rho = (m+1,m-1,\ldots,2,1|1,2,\ldots,n-1,n+1).$$

For $m = n$, $s_{\Lm} = 2$ and $M_{\Lm} = \emptyset$.  We obtain $|\sdim(V)| = 2$.  For $m = n+1$, we have $|\Gamma_{\Lm}| = n-1$.  So $r < \df \g$, and our formula gives $0$.
Finally, for $m > n+1$, $s_{\Lm} = 1$ and 
\[  M_{\Lm}= \left\{ \epsilon_i-\epsilon_j \mid i,j \in \{1,2,\ldots,m-n-1\} \cup \{m-n+1\}  \right\}.   \]
Thus $\Lm+\rho-\rho_{\Lm}^{0} = \epsilon_1 -\epsilon_{m-n+1}$
which is the highest weight of the adjoint representation of $\g_{\Lm}$ and we get that $|\sdim(V)|=(m-n)^2 -1$ as required.

%
%

\end{document}